\newtheorem{theorem}{Theorem}
\newtheorem{defn}{Definition}
\newtheorem{ex}{Example}
\newtheorem{remark}{Remark}[section]
\title{Fixed point theorems for Kannan type mappings with applications to split feasibility  and variational inequality problems}
\author{Vasile Berinde$^{1,2}$}
\author{M\u ad\u alina P\u acurar$^{3}$}
\begin{document}
\maketitle \pagestyle{myheadings} \markboth{Vasile Berinde and M\u ad\u alina P\u acurar} {Fixed point theorems for Kannan type mappings...}
\begin{abstract}
 The aim of this paper in to introduce a large class of mappings, called {\it enriched Kannan mappings}, that includes all Kannan mappings and some nonexpansive mappings. We study the set of fixed points and prove a convergence theorem for Kransnoselskij iteration used to approximate fixed points of enriched Kannan mappings in Banach spaces. We then extend further these mappings to the class of enriched Bianchini mappings. Examples to illustrate the effectiveness of our results are also given. As applications of our main fixed point theorems, we present two Kransnoselskij projection type algorithms for solving split feasibility problems and variational inequality problems in the class of enriched Kannan mappings and enriched Bianchini mappings, respectively.

\end{abstract}
\section{Introduction}
Let $X$ be a nonempty set and $T:X\rightarrow X$ a self mapping. We denote the set of fixed points of $T$ by $Fix\,(T)$, i.e., $Fix\,(T)=\{a\in X: T(a)=a\}$ and define the $n^{th}$ iterate of $T$ as usually, that is, $T^0=I$ (identity map) and $T^n=T^{n-1}\circ T$, for $n\geq 1$.

The mapping $T$ is said to be a {\it Picard operator}, see for example Rus \cite{Rus01}, if:\\ (i) $Fix\,(T)=\{p\}$  \quad and (ii)  $T^n(x_0) \rightarrow p$ as $n\rightarrow \infty$, for any $x_0$ in $X$.

The most important and useful class of Picard operators, that play a crucial role in nonlinear analysis, is the  class of mappings generally known in literature as {\it Banach contractions}, first introduced by Banach in \cite{Ban22}, in the case of what we call now a Banach space, and then extended to the setting of complete metric spaces by Caccioppoli  \cite{Cac}. Let $(X,d)$ be a metric space. A mapping $T:X\rightarrow X$ is called a {\it Banach contraction} if there exists a constant $c\in[0,1)$ such that
\begin{equation} \label{eq1}
d(Tx,Ty)\leq c\cdot d(x,y),\forall x,y \in X.
\end{equation}

Banach contraction mapping principle essentially states that, in a complete metric space $(X,d)$, any Banach contraction $T:X\rightarrow X$
is a Picard operator.

It is obvious by \eqref{eq1} that any Banach contraction is continuous but, in general, a Picard operator is not necessarily continuous. The first example of such discontinuous Picard operators has been given by Kannan in 1968, see \cite{Kan68}, \cite{Kan69}. A mapping $T:X\rightarrow X$ is called a {\it Kannan mapping} if there exists a constant $a\in[0,1/2)$ such that
\begin{equation} \label{eq2}
d(Tx,Ty)\leq a \left[d(x,Tx)+d(y,Ty)\right],\forall x,y \in X,
\end{equation}

Kannan \cite{Kan68} proved the following fixed point theorem, see also \cite{Kan69}.
\begin{theorem}\label{th0}
Let $(X,d)$ be  a complete metric space and let $T:X\rightarrow X$ be a Kannan mapping. Then $T$ is a Picard operator.
\end{theorem}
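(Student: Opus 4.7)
The plan is to carry out a Picard-style iterative argument, with the crucial adaptation that continuity of $T$ is not available and must be circumvented by exploiting the Kannan condition \eqref{eq2} directly at the limit point.

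First, I would fix an arbitrary $x_0 \in X$ and form the iterative sequence $x_{n+1} = Tx_n$. The initial step is to turn \eqref{eq2} into a standard geometric decay estimate for consecutive terms. Applying \eqref{eq2} with $x = x_{n-1}$ and $y = x_n$ gives
\begin{equation*}
d(x_n, x_{n+1}) \leq a\bigl[d(x_{n-1}, x_n) + d(x_n, x_{n+1})\bigr],
\end{equation*}
and rearranging yields $d(x_n, x_{n+1}) \leq k\, d(x_{n-1}, x_n)$, where $k := a/(1-a)$. The assumption $a \in [0,1/2)$ is used here precisely to ensure $k \in [0,1)$, so by induction $d(x_n, x_{n+1}) \leq k^n d(x_0, x_1)$.

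Second, a triangle-inequality summation in the style of Banach's proof, using the geometric series $\sum k^i$, shows that $(x_n)$ is Cauchy. Completeness of $X$ then furnishes a limit $p \in X$ with $x_n \to p$.

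The main obstacle, and where the proof departs from Banach's argument, is showing $Tp = p$: since $T$ need not be continuous, we cannot simply pass the limit through $T$. Instead, I would apply the Kannan condition \eqref{eq2} to the pair $(p, x_n)$, obtaining
\begin{equation*}
d(Tp, x_{n+1}) = d(Tp, Tx_n) \leq a\bigl[d(p, Tp) + d(x_n, x_{n+1})\bigr].
\end{equation*}
Letting $n \to \infty$, the left side tends to $d(Tp, p)$ and the term $d(x_n, x_{n+1})$ vanishes by the estimate already established, yielding $d(Tp, p) \leq a\, d(p, Tp)$, which forces $Tp = p$ because $a < 1$. Finally, uniqueness follows immediately: if $p$ and $q$ are both fixed points, \eqref{eq2} gives $d(p, q) = d(Tp, Tq) \leq a[d(p, Tp) + d(q, Tq)] = 0$. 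Combined with the convergence $x_n \to p$ from arbitrary $x_0$, this shows $T$ is a Picard operator.
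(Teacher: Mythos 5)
Your proposal is correct and follows essentially the same route the paper uses (the paper states Theorem \ref{th0} with a citation to Kannan, but its proof of Theorem \ref{th1} runs the identical argument for the Kannan mapping $T_\lambda$): the contraction-type estimate $d(x_n,x_{n+1})\leq \frac{a}{1-a}d(x_{n-1},x_n)$, the Cauchy/completeness step, the use of the Kannan inequality at the limit point in place of continuity, and the immediate uniqueness argument. The only cosmetic difference is that you apply \eqref{eq2} to the pair $(p,x_n)$ and pass to the limit directly, whereas the paper first writes $\|p-T_\lambda p\|\leq\|p-x_{n+1}\|+\|T_\lambda x_n-T_\lambda p\|$ and rearranges; the content is the same.
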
 

It is important to note that the class of Kannan mappings \eqref{eq2} is independent of that of Banach contractions \eqref{eq1}, see \cite{Rho}, \cite{Mes}  and \cite{Ber04}.

Banach and Kannan contractions also exhibit a different behaviour with respect to the completeness of the ambient space: while Kannan contraction mapping principle characterises the metric completeness, as shown by Subrahmanyam \cite{Sub}, Banach contraction mapping principle does not, see  \cite{Con}, where it is presented an example of a metric space $X$ such that $X$ is not complete and every contraction on $X$ has a fixed point. There are many other research works on the similarities and diferrences between Banach contractions and Kannan mappings, see \cite{Ber04}, \cite{Ber07}, \cite{Del09}, \cite{Del10}, \cite{Kik08}, \cite{Kik08a}, \cite{Pac09}, \cite{Shi}, \cite{Suz05} and \cite{Wlo}. 

These are only some of the many reasons why Kannan contractions and generalizations of Kannan mappings play a particularly important role in fixed point theory and nonlinear analysis and attracted a rather important research work in the last decades, see \cite{Bal}, \cite{Ber07}, \cite{BPet}, \cite{Bla}, \cite{Dom}, \cite{Gor17}, \cite{Gor18}, \cite{Mar}, \cite{Nak}, \cite{Pac09}, \cite{Reich71}-\cite{Reich72}, \cite{Rus79}-\cite{Ume} and \cite{Wlo} and references therein.

In this paper, by using the technique of enrichment the contractive type mappings by Krasnoselskij averaging process, previously used in the papers \cite{Ber19}, \cite{Ber19a}, \cite{Pac19b}, \cite{Pac19c},  we introduce a class of generalized Kannan mappings and prove corresponding fixed point theorems for such contractions in Banach spaces. 

Examples to illustrate the richness of the new classes of contractions as well as some relevant applications to solving split feasibility problems and variational inequality problems are also given.

\section{Approximating fixed points of enriched Kannan mappings} 

\begin{defn}
Let $(X,\|\cdot\|)$ be a linear normed space. A mapping $T:X\rightarrow X$ is said to be an {\it enriched Kannan mapping} if there exist  $a\in[0,1/2)$ and $k\in[0,ü\infty)$ such that
\begin{equation} \label{eq3}
\|k(x-y)+Tx-Ty\|\leq a \left[\|x-Tx\|+\|y-Ty\|\right],\forall x,y \in X.
\end{equation}
To indicate the constants involved in \eqref{eq3} we shall also call  it  $(k,a$)-{\it enriched Kannan mapping}. 
\end{defn}

\begin{ex}[ ] \label{ex1}
\indent

(1) Any Kannan mapping is a $(0,a)$-enriched Kannan mapping, i.e., it satisfies \eqref{eq3} with $k=0$.

(2) Let $X=[0,1]$ be endowed with the usual norm and $T:X\rightarrow X$ be defined by $Tx=1-x$, for all $x\in [0,1]$. It is easy to check that $T$ is nonexpansive (it is an isometry),  $T$ is not a Kannan mapping but $T$ is an enriched Kannan mapping. Indeed, if $T$ would be a Kannan mapping, then there would exist $a\in[0,1/2)$ such that 
$$
|x-y|\leq a \cdot \left[|2x-1|+|2y-1|\right],\forall x,y \in [0,1],
$$
which, for $x=1/2$ and $y=1$, yields the contradiction $1\leq 2 a< 1$. 

The enriched Kannan mapping condition \eqref{eq3} is in this case equivalent to
$$
|(k-1)(x-y)|\leq a \cdot \left[|2x-1|+|2y-1|\right],\forall x,y \in [0,1],
$$
which, in view of the fact that $2|x-y|\leq |2x-1|+|2y-1|$, holds true if one chooses $|k-1|=2 a$, for any $a\in[0,1/2)$. This is possible only for $k<1$, which yields $k=1-2a>0$. Hence, for any $a\in[0,1/2)$, $T$ is a $(1-2a,a)$-enriched Kannan mapping and $Fix\,(T)=\left\{\dfrac{1}{2}\right\}$.

(3) Any of the mappings $T$ in Examples 1.3.1, 1.3.2 and 1.3.5 in P\u acurar \cite{Pac09} are discontinuous enriched Kannan mappings.

\end{ex}
\begin{remark}
We note that for $T$ in Example \ref{ex1},  (2), Picard iteration $\{x_n\}$ associated to $T$, that is, $x_{n+1}=1-x_n$, $n\geq 0$, does not converge for any $x_0$ different of $\dfrac{1}{2}$, the unique fixed point to $T$. 

This suggest us that in order to approximate fixed points of enriched Kannan mappings we need more elaborate fixed point iterative schemes.

The next result provides a convergence theorem for the Krasnoselskij iterative method in the class of {\it enriched} Kannan mappings.
\end{remark}

\begin{theorem}  \label{th1}
Let $(X,\|\cdot\|)$ be a Banach space and $T:X\rightarrow X$ a $(k,a$)-{\it enriched Kannan mapping}. Then

$(i)$ $Fix\,(T)=\{p\}$;

$(ii)$ There exists $\lambda\in (0,1]$ such that the iterative method
$\{x_n\}^\infty_{n=0}$, given by
\begin{equation} \label{eq3a}
x_{n+1}=(1-\lambda)x_n+\lambda T x_n,\,n\geq 0,
\end{equation}
converges to p, for any $x_0\in X$;

$(iii)$ The following estimate holds
\begin{equation}  \label{3.2-1}
\|x_{n+i-1}-p\| \leq\frac{\delta^i}{1-\delta}\cdot \|x_n-
x_{n-1}\|\,,\quad n=0,1,2,\dots;\,i=1,2,\dots
\end{equation}
where $\delta=\dfrac{a}{1-a}$.
\end{theorem}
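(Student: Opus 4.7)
The strategy is to reduce the enriched Kannan condition \eqref{eq3} to the classical Kannan condition \eqref{eq2} satisfied by a suitable averaged operator, and then invoke Theorem \ref{th0}. For $\lambda \in (0,1]$ introduce the Krasnoselskij-averaged map $T_\lambda := (1-\lambda)I + \lambda T$, and observe two elementary facts: $\mathrm{Fix}(T_\lambda) = \mathrm{Fix}(T)$, and the iteration \eqref{eq3a} is precisely the Picard iteration $x_{n+1} = T_\lambda x_n$.

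The key algebraic step is to pick $\lambda = 1/(k+1) \in (0,1]$, so that $(1-\lambda)/\lambda = k$. Then a direct computation gives the two identities
\begin{equation*}
T_\lambda x - T_\lambda y \;=\; \lambda\bigl[k(x-y) + Tx - Ty\bigr], \qquad x - T_\lambda x \;=\; \lambda(x - Tx).
\end{equation*}
Substituting these into \eqref{eq3} (after multiplying both sides by $\lambda$) yields
\begin{equation*}
\|T_\lambda x - T_\lambda y\| \;\leq\; a\bigl[\|x - T_\lambda x\| + \|y - T_\lambda y\|\bigr], \qquad \forall x,y \in X,
\end{equation*}
which is exactly the classical Kannan condition \eqref{eq2} with the same constant $a \in [0,1/2)$. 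Since $(X,\|\cdot\|)$ is Banach, Theorem \ref{th0} applied to $T_\lambda$ delivers a unique fixed point $p$ and the convergence $T_\lambda^n x_0 \to p$ for every $x_0 \in X$; in view of the two facts above this proves $(i)$ and $(ii)$. (The borderline case $k=0$ corresponds to $\lambda=1$ and reduces immediately to Theorem \ref{th0}.)

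For $(iii)$, I would derive the Kannan a priori estimate in the $T_\lambda$-picture. Applying the Kannan inequality to the consecutive iterates $x_n$ and $x_{n-1}$ gives $\|x_{n+1}-x_n\| \leq a[\|x_{n+1}-x_n\| + \|x_n - x_{n-1}\|]$, hence $\|x_{n+1}-x_n\| \leq \delta\|x_n - x_{n-1}\|$ with $\delta = a/(1-a) \in [0,1)$. Iterating this gives $\|x_{n+j}-x_{n+j-1}\| \leq \delta^{j}\|x_n - x_{n-1}\|$, and summing the resulting geometric series from $x_{n+i-1}$ onwards and passing to the limit $p$ by the triangle inequality yields the desired bound
\begin{equation*}
\|x_{n+i-1} - p\| \;\leq\; \frac{\delta^i}{1-\delta}\,\|x_n - x_{n-1}\|.
\end{equation*}

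The only real obstacle is spotting the correct choice $\lambda = 1/(k+1)$ together with the two identities above, which together convert enriched Kannan into ordinary Kannan; once that is in place, the theorem is an immediate consequence of Kannan's classical result and its standard error analysis.
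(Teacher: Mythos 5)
Your proposal is correct and follows essentially the same route as the paper: the same choice $\lambda=1/(k+1)$ for the averaged map $T_\lambda$, the same observation that the enriched condition \eqref{eq3} becomes the classical Kannan condition for $T_\lambda$ with the same constant $a$, and the same identification of \eqref{eq3a} with the Picard iteration of $T_\lambda$, together with the standard geometric-series error analysis for \eqref{3.2-1}. The only organizational difference is that you quote Theorem \ref{th0} for existence, uniqueness and convergence, whereas the paper re-derives the Cauchy argument explicitly (since it needs the intermediate estimates for part $(iii)$ anyway), which your separate derivation of the a posteriori bound supplies equivalently.
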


\begin{proof}
For any $\lambda\in (0,1)$ consider the averaged mapping $T_\lambda$, given by
\begin{equation} \label{eq4}
T_\lambda (x)=(1-\lambda)x+\lambda T(x), \forall  x \in X.
\end{equation}
It easy to prove that $T_\lambda$ possesses the following important property:
$$
Fix(\,T_\lambda)=Fix\,(T).
$$
If $k> 0$ in \eqref{eq3}, then let us put  $\lambda=\dfrac{1}{k+1}$. Obviously, we have $0<\lambda<1$ and thus the contractive condition \eqref{eq3} becomes
$$
\left \|\left(\frac{1}{\lambda}-1\right)(x-y)+Tx-Ty\right\|\leq a \left[\|x-Tx\|+\|y-Ty\|\right],\forall x,y \in X,
$$
which can be written in an equivalent form as
\begin{equation} \label{eq5}
\|T_\lambda (x)-T_\lambda y\|\leq a  \left[\|x-T_\lambda x\|+\|y-T_\lambda y\|\right],\forall x,y \in X.
\end{equation}
The above inequality shows that $T_\lambda$ is a Kannan mapping. 

According to \eqref{eq4}, the iterative process $\{x_n\}^\infty_{n=0}$ defined by  \eqref{eq3a} is the Picard iteration associated to $T_\lambda$, that is,
$$
x_{n+1}=T_\lambda x_n,\,n\geq 0.
$$ 
Take $x=x_n$ and $y=x_{n-1}$ in  \eqref{eq5} to get
$$
\|x_{n+1}-x_{n}\|\leq a \left(\|x_n-x_{n+1}\|+\|x_{n}-x_{n-1}\|\right),
$$
which yields
$$
\|x_{n+1}-x_{n}\|\leq \frac{a}{1-a}\|x_{n}-x_{n-1}\|,\,n\geq 1.
$$
Since $0<a<\dfrac{1}{2}$, by denoting $\delta=\dfrac{a}{1-a}$, we have $0<\delta<1$ and therefore the sequence $\{x_n\}^\infty_{n=0}$ satisfies
\begin{equation} \label{eq6}
\|x_{n+1}-x_{n}\|\leq \delta \|x_{n}-x_{n-1}\|,\,n\geq 1.
\end{equation}
By \eqref{eq6} one obtains routinely the following two estimates
\begin{equation} \label{eq7a}
\|x_{n+m}-x_{n}\|\leq \delta^n \cdot \frac{1-\delta^m}{1-\delta}\cdot \|x_{1}-x_{0}\|,\,n\geq 0, m\geq 1.
\end{equation}
and
\begin{equation} \label{eq8}
\|x_{n+m}-x_{n}\|\leq \delta \cdot \frac{1-\delta^m}{1-\delta}\cdot \|x_{n}-x_{n-1}\|,\,n\geq 1, \,m\geq 1.
\end{equation}
Now, by \eqref{eq7a} it follows that $\{x_n\}^\infty_{n=0}$ is a Cauchy sequence and hence it is convergent in the Banach space $(X,\|\cdot\|)$. Let us denote
\begin{equation} \label{eq10a}
p=\lim_{n\rightarrow \infty} x_n.
\end{equation}
We first prove that $p$ is a fixed point of $T_\lambda$. We have
\begin{equation} \label{eq9}
\|p-T_\lambda p\|\leq \|p-x_{n+1}\|+\|x_{n+1}-T_\lambda p\|=\|x_{n+1}-p\|+\|T_\lambda x_{n}-T_\lambda p\|.
\end{equation}
By \eqref{eq5} it results that 
$$
\|T_\lambda x_{n}-T_\lambda p\|\leq a \left[\|x_{n}-T_\lambda x_{n}\|+\|p-T_\lambda p\|\right],
$$
and therefore, by \eqref{eq9} we obtain 
\begin{equation} \label{eq10}
\|p-T_\lambda p\|\leq \frac{1}{1-a} \|x_{n+1}-p\|+\delta \|x_{n+1}-x_{n}\|,\,n\geq 0.
\end{equation}
Now, by letting  $n\rightarrow \infty$ in \eqref{eq10} we get $\|p-T_\lambda p\|=0$, that is, $p=T_\lambda p$. So, $p\in Fix\,(T_\lambda)$. 

We prove that $p$ is the unique fixed point of $T_\lambda$. Assume that $q\neq p$ is another fixed point of $T_\lambda$. Then, by \eqref{eq5} 
$$
0<\|p-q\|\leq a \cdot 0,
$$
a contradiction. Hence $Fix\,(T_\lambda)=\{p\}$ and since $Fix\,(T)=Fix(\,T_\lambda)$, claim $(i)$ is proven. 

Conclusion $(ii)$ now follows by \eqref{eq10a}.

To prove $(iii)$, we let $m\rightarrow \infty$ in \eqref{eq7a}  and \eqref{eq8} to get
\begin{equation} \label{eq11}
\|x_n-p\|\leq  \frac{\delta^n}{1-\delta}\cdot \|x_{1}-x_{0}\|,\,n\geq 1
\end{equation}
and
\begin{equation} \label{eq12}
\|x_n-p\|\leq \frac{\delta}{1-\delta}\cdot \|x_{n}-x_{n-1}\|,\,n\geq 1,
\end{equation}
respectively and then  we merge \eqref{eq11}  and \eqref{eq12} to get the unifying error estimate \eqref{3.2-1}.

The remaining case $k=0$ is similar to $k\neq 0$ with the only difference that in this case $\lambda=1$ and hence we work with $T(=T_1)$, when Kasnoselskij iteration \eqref{eq3a} reduces to Picard iteration
$$
x_{n+1}=T x_n,\,n\geq 0.
$$
\end{proof}

\begin{remark}
1) It is well known, see for example Berinde \cite{Ber07}, that any Kannan mapping is a strictly quasi contractive mapping, that is
$$
\|Tx-p\|\leq \theta \cdot \|x-p\|,\forall x \in X, p\in Fix\,(T),\,(0<\theta <1).
$$
On the other hand, $T$ in Example \ref{ex1} (2) is nonexpansive and is not strictly quasi-contractive. So, enriched Kannan mapping is a larger class of mappings than the class of strictly quasi-contractive mappings.

2) In the particular case $k=0$, by Theorem \ref{th1} we get the classical Kannan fixed point theorem (see Kannan \cite{Kan68}) in the setting of a Banach space.

3) As proved by Subrahmanyam \cite{Sub}, Kannan's fixed point theorem characterizes the metric completeness. That is, a metric space $(X,d)$ is complete if and only if every Kannan mapping on $X$  has a fixed point. So, it is an open question whether or not enriched Kannan mapping mapping principle (Theorem \ref{th1}) still characterizes the metric completeness of the ambient space.

\end{remark}

\section{Approximating fixed points of enriched Bianchini mappings} 

In the renown Rhoades' classification of contractive conditions \cite{Rho}, Banach contraction condition is numbered (1),  while  Kannan mapping condition is numbered (4), followed by Bianchini's contraction condition, numbered (5), which has been introduced and studied in \cite{Bia}. As any Kannan mapping is a Bianchini mapping but the reverse is generally not true (just use the function $T:[0,1]\rightarrow [0,1]$), $Tx=x/3$, $x\in [0,1]$, to prove this assertion), our aim in this section is to introduce and study the class of enriched Bianchini mappings which is independent of the class of enriched Kannan mappings.

\begin{defn} \label{def2}
Let $(X,\|\cdot\|)$ be a linear normed space. A mapping $T:X\rightarrow X$ is said to be an {\it enriched Bianchini mapping} if there exist  $h\in[0,1)$ and $k\in[0,ü\infty)$ such that
\begin{equation} \label{eq13}
\|k(x-y)+Tx-Ty\|\leq h \max\left\{\|x-Tx\|,\|y-Ty\|\right\},\forall x,y \in X.
\end{equation}
To indicate the constants involved in \eqref{eq13} we shall call  $T$ a $(k,h$)-{\it enriched Bianchini mapping}. 
\end{defn}

\begin{ex}[ ] \label{ex2}
\indent

(1) If $k=0$ then by \eqref{eq13}, we obtain the original Bianchini contraction condition. So, any Bianchini mapping is a $(0,h$)-enriched Bianchini mapping.

(2) Any $(k,a$)-enriched Kannan mapping is a $(k,h$)-enriched Bianchini mapping, with $h=2 a$, in view of the inequality $u+v\leq 2 \max\{u,v\}$. This implies that the nonexpansive map given in Example \ref{ex1} (2) is a $(2(1-a),2a)$-enriched Bianchini mapping, for any $a\in (0,1/2)$. For this mapping, as shown in the previous section, Picard iteration does not converge, in general. 

\end{ex}
The next result proves that fixed points  of strictly {\it enriched} Bianchini mappings can be approximated by  Krasnoselskij iterative method.

\begin{theorem}  \label{th2}
Let $(X,\|\cdot\|)$ be a Banach space and $T:X\rightarrow X$ a $(k,h$)-{\it enriched Bianchini mapping}. Then

$(i)$ $Fix\,(T)=\{p\}$;

$(ii)$ There exists $\lambda\in (0,1]$ such that the iterative method
$\{x_n\}^\infty_{n=0}$, given by
\begin{equation} \label{eq1.3a}
x_{n+1}=(1-\lambda)x_n+\lambda T x_n,\,n\geq 0,
\end{equation}
converges to p, for any $x_0\in X$;

$(iii)$ The following estimate holds
\begin{equation}  \label{1.3b}
\|x_{n+i-1}-p\| \leq\frac{h^i}{1-h}\cdot \|x_n-
x_{n-1}\|\,,\quad n=0,1,2,\dots;\,i=1,2,\dots
\end{equation}
\end{theorem}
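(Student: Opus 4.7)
The plan is to carry out the same Krasnoselskij averaging strategy that worked for Theorem \ref{th1}, adapted to the maximum on the right-hand side of the Bianchini condition. For $k>0$, set $\lambda=1/(k+1)\in(0,1)$ and introduce the averaged mapping $T_\lambda(x)=(1-\lambda)x+\lambda T(x)$; as in the previous section one has $Fix(T_\lambda)=Fix(T)$. Multiplying \eqref{eq13} by $\lambda$ and using the identity $\|x-Tx\|=\lambda^{-1}\|x-T_\lambda x\|$ converts the enriched inequality into the classical Bianchini condition
\[
\|T_\lambda x-T_\lambda y\|\leq h\max\{\|x-T_\lambda x\|,\|y-T_\lambda y\|\},\quad\forall x,y\in X.
\]
The case $k=0$ corresponds to $\lambda=1$, where $T_\lambda=T$ already satisfies this inequality, so the argument proceeds uniformly.

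Writing $x_{n+1}=T_\lambda x_n$ and applying the Bianchini inequality to $(x,y)=(x_n,x_{n-1})$ yields
\[
\|x_{n+1}-x_n\|\leq h\max\{\|x_{n+1}-x_n\|,\|x_n-x_{n-1}\|\}.
\]
This is the only place where the argument departs from the Kannan proof of Theorem \ref{th1}: the maximum forces a case split. If the maximum equals $\|x_{n+1}-x_n\|$, then $h<1$ immediately gives $x_{n+1}=x_n$ and the iteration has already stabilised; otherwise one recovers the clean contractive recursion $\|x_{n+1}-x_n\|\leq h\|x_n-x_{n-1}\|$. Telescoping then produces the standard two-parameter estimate and shows that $\{x_n\}$ is Cauchy, hence convergent in $X$ to some point $p$.

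It remains to check that $p$ is the unique fixed point of $T_\lambda$ and to derive the error bound. For existence, one bounds $\|p-T_\lambda p\|\leq\|p-x_{n+1}\|+\|T_\lambda x_n-T_\lambda p\|$ and applies the Bianchini inequality to the second term, which introduces $h\max\{\|x_n-x_{n+1}\|,\|p-T_\lambda p\|\}$; a second case split, this time with the maximum possibly being $\|p-T_\lambda p\|$ itself and then absorbed via the factor $1-h>0$ on the left, leads to $\|p-T_\lambda p\|=0$ upon letting $n\to\infty$. Uniqueness is immediate from \eqref{eq13}, since two distinct fixed points would give $\|p-q\|\leq h\cdot 0$. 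Finally, letting $m\to\infty$ in the telescoped recursion yields \eqref{1.3b}. The main obstacle throughout is the maximum in \eqref{eq13}, which forces these two short case distinctions absent from the Kannan argument, but the strict inequality $h<1$ rescues us in both places.
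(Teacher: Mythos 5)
Your proposal is correct and follows essentially the same route as the paper: average with $\lambda=1/(k+1)$, observe that $T_\lambda$ is a classical Bianchini mapping, run the Picard iteration for $T_\lambda$ with the two case splits forced by the maximum, and conclude existence, uniqueness and the error estimate as in Theorem \ref{th1}. In fact your treatment of the first case split is slightly cleaner than the paper's, which dismisses it as a ``contradiction'' (valid only when $\|x_{n+1}-x_n\|>0$), whereas you correctly note that in that case $x_{n+1}=x_n$ and the contractive recursion holds trivially.
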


\begin{proof}
For any $\lambda\in (0,1)$, consider the averaged mapping $T_\lambda$, given by
\begin{equation} \label{eq1.4}
T_\lambda (x)=(1-\lambda)x+\lambda T(x), \forall  x \in X,
\end{equation}
which has the  property $
Fix(\,T_\lambda)=Fix\,(T).
$

If $k> 0$ in \eqref{eq13}, then let us denote  $\lambda=\dfrac{1}{k+1}\in (0,1)$. Thus the contractive condition \eqref{eq1.3a} becomes
$$
\left \|\left(\frac{1}{\lambda}-1\right)(x-y)+Tx-Ty\right\|\leq h \max\left\{\|x-Tx\|,\|y-Ty\|\right\},\forall x,y \in X,
$$
which can be written in an equivalent form as
\begin{equation} \label{eq1.5}
\|T_\lambda x-T_\lambda y\|\leq h  \max\left\{\|x-T_\lambda x\|,\|y-T_\lambda y\|\right\},\forall x,y \in X.
\end{equation}
The above inequality shows that $T_\lambda$ is a Bianchini mapping. 

According to \eqref{eq1.4}, the iterative process $\{x_n\}^\infty_{n=0}$ defined by  \eqref{eq1.3a} is the Picard iteration associated to $T_\lambda$, that is,
$$
x_{n+1}=T_\lambda x_n,\,n\geq 0.
$$ 
Take $x=x_n$ and $y=x_{n-1}$ in  \eqref{eq1.5} to get
\begin{equation} \label{eq1.6}
	\|x_{n+1}-x_{n}\|\leq h \max\left\{\|x_n-x_{n+1}\|,\|x_{n}-x_{n-1}\|\right).
\end{equation}
There are two possible cases.

{\bf Case 1.} $\max\left\{\|x_n-x_{n+1}\|,\|x_{n}-x_{n-1}\|\right)=\|x_n-x_{n+1}\|$. In this case,  by \eqref{eq1.6} it results
$$
\|x_{n+1}-x_{n}\|\leq h \|x_n-x_{n+1}\|<\|x_n-x_{n+1}\|,
$$
a contradiction. Therefore, the only possibility is  

{\bf Case 2.} $\max\left\{\|x_n-x_{n+1}\|,\|x_{n}-x_{n-1}\|\right)=\|x_{n}-x_{n-1}\|$, when by  \eqref{eq1.6} it results that the sequence $\{x_n\}^\infty_{n=0}$ satisfies the inequality
\begin{equation} \label{eq6.1}
\|x_{n+1}-x_{n}\|\leq h \|x_{n}-x_{n-1}\|,\,n\geq 1.
\end{equation}
By \eqref{eq6.1} one obtains routinely the following two estimates
\begin{equation} \label{eq1.7a}
\|x_{n+m}-x_{n}\|\leq h^n \cdot \frac{1-h^m}{1-h}\cdot \|x_{1}-x_{0}\|,\,n\geq 0, m\geq 1,
\end{equation}
and
\begin{equation} \label{eq1.8}
\|x_{n+m}-x_{n}\|\leq h \cdot \frac{1-h^m}{1-h}\cdot \|x_{n}-x_{n-1}\|,\,n\geq 1, \,m\geq 1.
\end{equation}
Now, by \eqref{eq1.7a} it follows that $\{x_n\}^\infty_{n=0}$ is a Cauchy sequence and hence it is convergent in the Banach space $(X,\|\cdot\|)$. Let us denote
\begin{equation} \label{eq1.10a}
p=\lim_{n\rightarrow \infty} x_n.
\end{equation}
We prove that $p$ is a fixed point of $T_\lambda$. We have
\begin{equation} \label{eq1.9}
\|p-T_\lambda p\|\leq \|p-x_{n+1}\|+\|x_{n+1}-T_\lambda p\|=\|x_{n+1}-p\|+\|T_\lambda x_{n}-T_\lambda p\|,
\end{equation}
and so by \eqref{eq1.5} it results that 
$$
\|T_\lambda x_{n}-T_\lambda p\|\leq h \max\left\{\|x_n-x_{n+1}\|,\|p-T_\lambda p\|\right\}.
$$
Now, if $\max\left\{\|x_n-x_{n+1}\|,\|p-T_\lambda p\|\right\}=\|x_n-x_{n+1}\|$, then we get
$$
\|T_\lambda x_{n}-T_\lambda p\|\leq h \|x_n-x_{n+1}\|\leq \dots \leq h^n \|x_0-x_{1}\|,
$$
and by \eqref{eq1.9} one obtains
$$
\|p-T_\lambda p\|\leq \|x_{n+1}-p\|+h^n \|x_0-x_{1}\|,\,n\geq 0,
$$
from which by letting $n\rightarrow \infty$ we get $\|p-T_\lambda p\|=0$, that is, $p=T_\lambda p$ and so, $p\in Fix\,(T_\lambda)$.

If $\max\left\{\|x_n-x_{n+1}\|,\|p-T_\lambda p\|\right\}=\|p-T_\lambda p\|$,\\ then by \eqref{eq1.9} one obtains 
\begin{equation} \label{eq1.10}
\|p-T_\lambda p\|\leq \frac{1}{1-h} \|x_{n+1}-p\|,\,n\geq 0,
\end{equation}
from which, by letting  $n\rightarrow \infty$ we get $\|p-T_\lambda p\|=0$, that is,  $p\in Fix\,(T_\lambda)$. 

We now prove that $p$ is the unique fixed point of $T_\lambda$. Assume that $q\neq p$ is another fixed point of $T_\lambda$. Then, by \eqref{eq1.5} 
$$
0<\|p-q\|\leq a \cdot 0,
$$
a contradiction. Hence $Fix\,(T_\lambda)=\{p\}$ and since $Fix\,(T)=Fix(\,T_\lambda)$, claim $(i)$ is proven. 

Conclusions $(ii)$ and $(iii)$  follow similarly to the proof of Theorem \ref{th1}.

The remaining case $k=0$ is also similar to $k\neq 0$ with the only difference that now $\lambda=1$ and hence we shall work with $T=T_1$, when Kasnoselskij iteration \eqref{eq3a} reduces to the simple Picard iteration
$$
x_{n+1}=T x_n,\,n\geq 0.
$$
\end{proof}

\section{Applications to split feasibility problems and variational inequality problems}

A large class of nonlinear problems can be solved by finding a solution of an equivalent fixed point problem $x=Tx$.  For such kind of problems, the iterative algorithms considered are selecting a member of the set $Fix\,(T )$ which is a solution to the original  problem. In many concrete applications, like the ones occurring in signal processing or image reconstruction problems, the operator $T$ is nonexpansive, when the technique of averaging mappings is extensively used, see Byrne \cite{Byr}. 

As we have seen in the previous two sections of the present paper, it is very useful to consider the averaging technique not necessarily for nonexpansive mappings but for other classes of contractions that include some nonexpansive mappings.

The aim of this section is to present generic convergence theorems for Krasnoselskij type algorithms that solve split feasibility problems and variational inequality problems, respectively. 

\subsection{Solving split feasibility problems}

The spit feasibility problem (SFP), introduced by Censor and Elfving in 1994 \cite{Cen} is: 
\begin{equation} \label{a1}
\text{ Find } x^*\in C \text{ such that } Ax^*\in Q,
\end{equation}
where $C$ and $Q$ are closed convex subsets of the Hilbert spaces $H_1$ and $H_2$, respectively, and $A: H_1\rightarrow H_2$ is a bounded linear operator. Various concrete problems in signal processing, like phase retrieval and design of a nonlinear synthetic discriminant filter for optical pattern recognition, see \cite{Xu}, can by formulated as SFPs.

If we assume that the SFP \eqref{a1} is consistent, that is, it has a solution and denote by $S$ the solution set of \eqref{a1}, then, see \cite{Xu}, $x^*\in C$ is a solution of \eqref{a1} if and only if it is a solution of the fixed point problem
\begin{equation} \label{a2}
x=P_C\left(I-\gamma A^*\left(I-P_Q\right)A\right)x,
\end{equation}
where $P_C$ and $P_Q$ are the nearest point projections onto $C$ and $Q$, respectively, $\gamma >0$ and $A^*$ is the adjoint operator of $A$. It has been shown by Byrne \cite{Byr} that if $\delta$ is the spectral radius of $A^*A$ and $\gamma\in (0,2/\delta)$, then the operator
$$
T=P_C\left(I-\gamma A^*\left(I-P_Q\right)A\right)
$$
is averaged and nonexpansive and the so called CQ algorithm
\begin{equation} \label{a3}
x_{n+1}=P_C\left(I-\gamma A^*\left(I-P_Q\right)A\right)x_n,\,n\geq 0,
\end{equation}
converges {\it weakly} to a solution of the SFP.

In the case of averaged nonexpansive mappings, the problem of turning the weak convergence above into strong convergence received a great deal of research work. This usually consists in considering additional assumptions, see \cite{Xu} for a recent survey on Halpern type algorithms.

We propose here an  alternative to all those approaches, by considering enriched Kannan and enriched Bianchini mappings, respectively, which are in general discontinuous mappings, instead of nonexpansive mappings, which are always continuous. In this case we shall have a SFP with unique solution, as shown by the next theorem, while the the considered algorithm \eqref{a4} will convergence strongly.
\begin{theorem} \label{th3}
Assume that the SFP problem \eqref{a1} is consistent,  $\gamma\in (0,2/\delta)$ and $T=P_C\left(I-\gamma A^*\left(I-P_Q\right)A\right)$ is a $(k,a)$-enriched Kannan mapping. Then there exist $\lambda\in[0,1)$ such that the iterative algorithm $\{x_n\}$ defined by 
\begin{equation} \label{a4}
x_{n+1}=(1-\lambda)x_n+\lambda P_C\left(I-\gamma A^*\left(I-P_Q\right)A\right)x_n,\,n\geq 0,
\end{equation}
converges strongly to the unique solution $x^*$ of the SFP problem \eqref{a1}, for any $x_0\in C.$
\end{theorem}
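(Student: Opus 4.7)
The plan is to reduce Theorem \ref{th3} directly to Theorem \ref{th1}, exploiting the well-known equivalence between the SFP and the fixed point problem for $T=P_C(I-\gamma A^*(I-P_Q)A)$. First I would recall that, under the hypothesis $\gamma\in(0,2/\delta)$, a point $x^*\in C$ solves \eqref{a1} if and only if $x^*=Tx^*$, i.e.\ $x^*\in Fix\,(T)$; this equivalence is the algebraic content of \eqref{a2} and does not require the nonexpansiveness of $T$, so it remains available in our setting where $T$ is assumed only to be enriched Kannan. Notice also that any fixed point of $T$ automatically belongs to $C$, because $T$ factors through $P_C$.

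Next, since a Hilbert space is in particular a Banach space, the assumption that $T$ is a $(k,a)$-enriched Kannan mapping puts us exactly in the framework of Theorem \ref{th1}. Part $(i)$ of that theorem immediately yields $Fix\,(T)=\{p\}$ for some $p\in H_1$, and by the equivalence recorded in the previous paragraph this $p$ coincides with the unique solution $x^*$ of the SFP.

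For the convergence of the algorithm, I would invoke part $(ii)$ of Theorem \ref{th1}, which gives the existence of a parameter $\lambda\in(0,1]$ (concretely $\lambda=1/(k+1)$ if $k>0$, and $\lambda=1$ if $k=0$) such that the Krasnoselskij sequence
\[
x_{n+1}=(1-\lambda)x_n+\lambda T x_n,\quad n\geq 0,
\]
converges in norm to $p=x^*$ from any starting point $x_0\in H_1$, and in particular from any $x_0\in C$. Since $Tx_n=P_C(I-\gamma A^*(I-P_Q)A)x_n$, this is precisely the iteration \eqref{a4}, so we obtain strong convergence $x_n\to x^*$, as required. If a rate estimate is desired, \eqref{3.2-1} with $\delta=a/(1-a)$ can be transcribed verbatim.

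The only genuinely delicate point is conceptual rather than technical: one must note that, unlike in Byrne's classical CQ framework, the operator $T$ here is not assumed nonexpansive, so we are \emph{replacing} the weak convergence machinery (demiclosedness, Opial's condition, etc.) by the enriched Kannan contractive inequality. Once this shift is accepted, the proof is a direct corollary of Theorem \ref{th1}, and uniqueness of the SFP solution becomes a free byproduct rather than an additional hypothesis.
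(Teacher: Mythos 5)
Your proposal is correct and follows essentially the same route as the paper, whose entire proof is the one-line reduction ``apply Theorem \ref{th1} with $T=P_C\left(I-\gamma A^*\left(I-P_Q\right)A\right)$''; you merely spell out the details (the fixed-point reformulation \eqref{a2}, the choice $\lambda=1/(k+1)$ or $\lambda=1$, and the strong convergence from Theorem \ref{th1}$(ii)$) that the paper leaves implicit. If anything, your version is slightly cleaner in applying Theorem \ref{th1} on the Banach space $H_1$ rather than on $C$, which is not itself a linear space.
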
 

\begin{proof}
We apply Theorem \ref{th1} for $X=C$ and $T=P_C\left(I-\gamma A^*\left(I-P_Q\right)A\right)$.
\end{proof}

\subsection{Solving variational inequality problems}

Let $H$ be a Hilbert space and let $C\subset H$ be closed and convex. A mapping $G:H\rightarrow H$ is called {\it monotone} if
$$
\langle Gx-Gy,x-y\rangle \geq 0, \forall x,y\in H.
$$
The {\it variational inequality problem} with respect to $G$ and $C$, denoted by $VIP(G,C)$, is to find $x^*\in C$ such that
$$
\langle Gx^*,x-x^*\rangle \geq 0, \forall x\in H.
$$
It is well known, see for example \cite{Byr}, that if $\gamma>0$ then $x^*\in C$ is a solution of the $VIP(G,C)$ if and only if $x^*$ is a solution of the fixed point problem
\begin{equation} \label{a5}
x=P_C\left(I-\gamma G\right) x,
\end{equation}
where $P_C$ is the nearest point projection onto $C$. 

In Byrne \cite{Byr} it is proven, amongst many other important results, that if $I-\gamma G$ and $P_C\left(I-\gamma G\right)$ are averaged nonexpansive mappings, then, under some additional assumptions, the iterative algorithm $\{x_n\}$ defined by 
\begin{equation} \label{a6}
x_{n+1}=P_C\left(I-\gamma G\right) x_n,\,n\geq 0,
\end{equation}
converges {\it weakly} to a solution of the $VIP(G,C)$, if such solutions exist.

Our alternative is to consider $VIP(G,C)$ for enriched Bianchini mappings, which are in general discontinuous mappings, instead of nonexpansive mappings, which are always continuous. In this case we shall have $VIP(G,C)$  with a unique solution, as shown by the next theorem. Moreover, the considered algorithm \eqref{a7} will convergence strongly to the solution of the $VIP(G,C)$.

\begin{theorem} \label{th4}
Assume that for $\gamma >0$, $T=P_C\left(I-\gamma G\right)$ is a $(k,a)$-enriched Bianchini mapping. Then there exist $\lambda\in[0,1)$ such that the iterative algorithm $\{x_n\}$ defined by 
\begin{equation} \label{a7}
x_{n+1}=(1-\lambda)x_n+\lambda P_C\left(I-\gamma G\right)x_n,\,n\geq 0,
\end{equation}
converges strongly to the unique solution $x^*$ of the $VIP(G,C)$, for any $x_0\in C.$
\end{theorem}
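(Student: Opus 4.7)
The plan is to mirror the very short proof of Theorem \ref{th3}, replacing the appeal to Theorem \ref{th1} by an appeal to Theorem \ref{th2}. Recall from \eqref{a5} that $x^{*}\in C$ solves $VIP(G,C)$ if and only if $x^{*}$ is a fixed point of $T=P_C(I-\gamma G)$; hence everything reduces to locating and approximating the (necessarily unique) fixed point of $T$.

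First I would apply Theorem \ref{th2} directly to $T$, viewed as a self-map of the ambient Hilbert space $H$ (regarded as a Banach space). Since by hypothesis $T$ is a $(k,a)$-enriched Bianchini mapping, Theorem \ref{th2} supplies a parameter $\lambda\in(0,1]$ such that (i) $Fix\,(T)=\{p\}$, so that $x^{*}:=p$ is the unique solution of $VIP(G,C)$, and (ii) the Krasnoselskij iterates $x_{n+1}=(1-\lambda)x_n+\lambda Tx_n$ converge strongly to $p$ for every starting point. The a priori error estimate \eqref{1.3b} is inherited as well, even though the statement of Theorem \ref{th4} does not record it.

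Second, I would verify that the iteration \eqref{a7} actually remains in $C$ when $x_0\in C$, so that the algorithm is intrinsic to the feasibility set. This is immediate: since $P_C$ takes values in $C$, one has $Tx_n\in C$; and $x_{n+1}$ is then a convex combination of $x_n\in C$ and $Tx_n\in C$ with weights $1-\lambda,\lambda\in[0,1]$, so $x_{n+1}\in C$ by convexity of $C$. The same observation guarantees that $p=\lim_n x_n$ lies in $C$, in accordance with $Fix\,(T)\subset T(H)\subset C$.

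I do not foresee any genuine obstacle: the proof is a one-line corollary of Theorem \ref{th2}, exactly parallel to the proof of Theorem \ref{th3}. The only substantive content, namely the existence of the averaging parameter $\lambda$ and the passage from a merely weakly convergent nonexpansive iteration to a strongly convergent one with a unique limit, was already established in Section 3; the projection algorithm in \eqref{a7} simply inherits these conclusions through the fixed-point reformulation \eqref{a5}. The only mild subtlety worth flagging is that the $\lambda$ produced by Theorem \ref{th2} lies in $(0,1]$, whereas the statement writes $\lambda\in[0,1)$; this is a harmless range discrepancy (identical to the one in Theorem \ref{th3}) and does not affect the argument.
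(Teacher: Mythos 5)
Your proposal is correct and takes essentially the same route as the paper, whose entire proof is the one-liner ``apply Theorem \ref{th2} with $X=C$ and $T=P_C(I-\gamma G)$''. Your variant of applying Theorem \ref{th2} on the whole space $H$ and then checking that the iterates stay in $C$ is a harmless (arguably cleaner, since $C$ itself is not a normed linear space) rephrasing of the same argument, and your remark about the $\lambda\in(0,1]$ versus $\lambda\in[0,1)$ discrepancy is accurate.
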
 

\begin{proof}
We apply Theorem \ref{th2} for $X=C$ and $T=P_C\left(I-\gamma G\right)$.
\end{proof}

\section{Conclusions and further developments}

In this paper we first introduced a large class of contractive mappings, called {\it enriched Kannan mappings}, that includes usual Kannan mappings and also includes some nonexpansive mappings. We studied the set of fixed points and constructed an algorithm of Kransnoselskij type in order to approximate fixed points of enriched Kannan mappings for which we have proved a strong convergence theorem. 

We then extended the enriched Kannan mappings to the slightly larger class of {\it enriched Bianchini mappings} and constructed the corresponding algorithm of Kransnoselskij type to approximate the fixed points of enriched Bianchini mappings for which we have also proved a strong convergence theorem. 
Nontrivial examples to illustrate the effectiveness of our fixed point results are also given. 

As applications of our main results, we presented two Kransnoselskij projection type algorithms for solving split feasibility problems and variational inequality problems in the class of enriched Kannan mappings and enriched Bianchini mappings, respectively, thus improving the existence and weak convergence results  for split feasibility problems and variational inequality problems in \cite{Byr}  to existence and uniqueness as well as to strong convergence theorems.

\vskip 0.5 cm {\it $^{1}$ Department of Mathematics and Computer Science

North University Center at Baia Mare

Technical University of Cluj-Napoca 

Victoriei 76, 430122 Baia Mare ROMANIA

E-mail: vberinde@cunbm.utcluj.ro}

\vskip 0.5 cm {\it $^{2}$ Academy of Romanian Scientists  (www.aosr.ro)

E-mail: vasile.berinde@gmail.com}

\vskip 0.5 cm {\it $^{3}$ Department of Statistics, Analysis, Forecast and Mathematics

Faculty of Economics and Bussiness Administration 

Babe\c s-Bolyai University of Cluj-Napoca,  Cluj-Napoca ROMANIA

E-mail: madalina.pacurar@econ.ubbcluj.ro}

\end{document}